\documentclass{article}
\usepackage[a4paper]{geometry}
\usepackage{amsmath}
\usepackage{amssymb}
\usepackage{amsthm}
\usepackage{mathrsfs}
\usepackage{mathtools}

\usepackage{tikz}
\usepackage{tikz-3dplot}
\usepackage{cleveref}
\usepackage{tikz-cd}
\usepackage{mathtools}

\usepackage{placeins}

\usepackage{float}

\usepackage{comment}

\usepackage{enumerate}

\usetikzlibrary{calc}

\theoremstyle{definition}
\newtheorem{thm}{Theorem}[section]
\crefname{thm}{Theorem}{Theorems}

\crefname{prop}{Proposition}{Propositions}

\crefname{lem}{Lemma}{Lemmas}

\newtheorem{conj}[thm]{Conjecture}

\crefname{defn}{Definition}{Definitions}

\newtheorem{case}{Case}
\newtheorem*{ack*}{Acknowledgements}

\newcommand{\co}{\operatorname{co}}

\usepackage{titling}

\title{The sharp threshold for Hausdorff convexification under Minkowski addition}
\author{Peter van Hintum}
\allowdisplaybreaks

\begin{document}\maketitle
\begin{abstract}
The Dyn-Farkhi conjecture asserts that the square of the Hausdorff distance from a compact set to its convex hull is subadditive with respect to Minkowski addition  \cite{Dyn01012005}. The conjecture is elementary in dimension 1, was recently proved by Meyer in dimension 2 \cite{meyer2025dyn}, and was disproved in dimensions $n\geq3$ by Fradelizi, Madiman, Marsiglietti, and Zvavitch \cite{fradelizi2018convexification}. The symmetric case $A=B$, however, remained open \cite{fradelizi2018convexification,meyer2025dyn}. We show that the conjecture already fails in this restricted setting. More precisely, for every $n\geq3$, we construct a compact set $A\subset\mathbb{R}^n$
 such that
$$d(A(k))=d(A)>0$$ for every $1\leq k\leq n-1$, where $A(k):=\frac1k (A+\dots+A)$ is the $k$-fold iterated Minkowski average of $A$.
We also prove that the threshold $k=n$ is sharp: for every nonempty compact $A\subset\mathbb{R}^n$ with $n\geq 2$, we have
$$d(A(n))\leq \left(1-\frac{n-1}{n(2n-1)}\right)d(A).$$
\end{abstract}

\section{Introduction}
Given $k,n\in\mathbb{N}$, and sets $A,B\subset\mathbb{R}^n$, let $A+B:=\{a+b:a\in A,b\in B\}$ be the \emph{Minkowski sum}, let $A(k):=\frac1k (\overbrace{A+\dots+A}^{k\text{ times}})$ be the \emph{iterated Minkowski average}, and let $\co(A)$ be the \emph{convex hull} of $A$. Intuitively, $A(k)$ increasingly resembles $\co(A)$ as $k$ grows. Quantitative results capturing this intuition have long received much attention, see e.g. \cite{starr1969quasi,emerson1969asymptotic,schneider1975measure,cassels1975measures,Dyn01012005,fradelizi2016minkowski,fradelizi2018convexification,figalli2015quantitative,van2021sharp,van2023sets}.

One way to quantify this tendency is through $d(A)$ defined as the Hausdorff distance between $A$ and $\co(A)$:
$$d(A):=\sup_{x\in\co(A)}\inf_{y\in A} \|x-y\|_2.$$
The Dyn-Farkhi conjecture \cite{Dyn01012005} asserted the subadditivity of the square of $d(\cdot)$ under Minkowski addition.
\begin{conj}[Dyn-Farkhi Conjecture]
Given $A,B\subset\mathbb{R}^n$ nonempty compact sets, we have
$$d(A+B)^2\leq d(A)^2+d(B)^2.$$
\end{conj}
For $n=1$, the conjecture is easy to verify. Meyer \cite{meyer2025dyn} recently proved the conjecture for $n=2$.
Fradelizi, Madiman, Marsiglietti, and Zvavitch \cite{fradelizi2018convexification} disproved this conjecture if $n\geq 3$ by constructing appropriate sets $A$ and $B$. The sets $A$ and $B$ they constructed were distinct and they asked whether the conjecture might be true if one additionally requires $A=B$. Note in the study of the convexification effect of Minkowski addition, it is often the case that stronger results are true for $A+A$ than for $A+B$, see e.g. \cite{van2021sharp,figalli2023sharp,figalli2025sharp}.

In the symmetric case $A=B$, the conjecture  becomes the following.
\begin{conj}[Dyn-Farkhi Conjecture for $A=B$]
Given a nonempty compact set $A\subset\mathbb{R}^n$, we have
$$d\left(A(2)\right)\leq \frac{d(A)}{\sqrt{2}}.$$
\end{conj}
The purpose of this note is to give a simple counterexample to this weaker conjecture (and thus also to the original Dyn-Farkhi conjecture) in the case $n\geq 3$. In fact, we will show that in this example no convexification occurs until after at least $n$ averages, i.e. $d(A(k))=d(A)$ for all $k\leq n-1$.
\begin{thm}\label{exampleprop}
In any dimension $n\geq 3$, there exists a compact set $A\subset\mathbb{R}^n$ so that for all $1\leq k\leq n-1$, we have
$$d\left(A(k)\right)=d(A)>0.$$
\end{thm}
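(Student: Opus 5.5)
The plan is to take $A$ to be the vertex set of a regular $n$-simplex together with its centroid, and to show that for $k\le n-1$ the averages $A(k)$ stay at least as far from a facet centroid as the centroid itself. To compute distances cleanly, I will realise the regular simplex as the standard simplex $\Delta=\co\{e_0,\dots,e_n\}\subset\{x\in\mathbb{R}^{n+1}:\sum x_i=1\}$, an $n$-dimensional affine space, so that Euclidean distances are just $\ell_2$ distances of barycentric coordinate vectors. Set $c=\frac{1}{n+1}(1,\dots,1)$ and
$$A=\{e_0,\dots,e_n\}\cup\{c\}.$$
Then $\co(A)=\Delta$. The candidate witness point is the facet centroid $f=(0,\frac1n,\dots,\frac1n)$, and the candidate value is $d(A)=\|f-c\|=\rho$, the inradius of $\Delta$; a direct computation gives $\rho^2=\frac{1}{n(n+1)}$.

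\textbf{Step 1: $d(A)=\rho$.} First I check $\operatorname{dist}(f,A)=\rho$. This holds because $\|f-e_i\|$ is at least the circumradius of a facet, which exceeds $\rho$. Then I show that no point of $\Delta$ is farther than $\rho$ from $A$. For this I partition $\Delta$ into the $n+1$ simplices $\co(\{c\}\cup F_i)$ over the facets $F_i$. On each piece, the farthest point from $\{c\}\cup\operatorname{vert}(F_i)$ is the circumcentre of that $n$-simplex, provided the circumcentre lies inside it, and otherwise lies on a face. I will compute this explicitly by symmetry. The circumcentre lies on the segment $[c,f_i]$ at equal distance from $c$ and from the vertices of $F_i$. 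This distance should turn out to be $\le\rho$, with the maximum over the piece attained at $f_i$. The same computation applied to lower-dimensional faces handles the boundary cases.

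\textbf{Step 2: $\operatorname{dist}(f,A(k))\ge\rho$ for $k\le n-1$.} Since $A\subset A(k)\subset\Delta$, we have $d(A(k))\le d(A)$, so only this lower bound is needed. A general point of $A(k)$ has the form
$$p=\tfrac{m}{k}c+\tfrac1k\sum_{j=1}^{k-m}e_{i_j},\qquad 0\le m\le k.$$
I expand $\|p-f\|^2$ in coordinates. Write $a$ for the number of indices $i_j$ equal to $0$, and $b_i$ for the multiplicities of the remaining indices. The key structural fact is that at most $k-m\le n-1$ of the $n$ coordinates $1,\dots,n$ receive a vertex contribution, so at least one of them equals only $\frac{m}{k(n+1)}$. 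Convexity of the square then bounds the sum over coordinates $1,\dots,n$ from below by the case of equal multiplicities on $k-m$ coordinates. Coordinate $0$ contributes $\bigl(\frac{m}{k(n+1)}+\frac ak\bigr)^2$. This reduces the problem to a one- or two-parameter inequality in $m$, $a$ and $k$. I expect it to hold with equality only at $m=k$, that is at $p=c$.

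\textbf{Main obstacle.} The crux is this final inequality, since it is exactly where $k\le n-1$ must be used: at $k=n$, the point $p=\frac1n\sum_{i=1}^n e_i=f$ itself lies in $A(n)$. I will first check the extreme cases, $m=0$ with $a=0$ (points inside the facet, whose distance from $f$ is at least the distance to a centroid of an $(n-2)$-face, which exceeds $\rho$) and $m=k-1$. After that I will show the expression is monotone or convex in $m$, so that these extremes suffice. The case $n=3$, $k=2$ can serve as a sanity check: edge midpoints lie at distance $\sqrt{1/6}$ from $f$, compared with $\rho=\sqrt{1/12}$.

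If Step 1 or the final inequality fails for this particular $A$, my fallback is to enlarge $A$. I would replace $\{c\}$ by a small piece of the segment from $c$ towards $e_0$, or shrink the vertex part towards $c$. Either change adjusts the balance between the two competing distances while keeping the same witness point $f$ and the same counting argument about the support of the vertex contributions.
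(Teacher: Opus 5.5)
\textbf{Step 1 is false, and the failure sinks the construction itself, not just the proof.} For $A=\{e_0,\dots,e_n\}\cup\{c\}$, the facet centroid $f$ is not a farthest point of $\Delta$ from $A$. It is in fact the point of its facet closest to $c$.

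Take an edge midpoint $\frac{e_0+e_1}{2}$. It is at distance $\frac{1}{\sqrt2}$ from every vertex, and its squared distance to $c$ is $\frac{n-1}{2(n+1)}$. This exceeds $\rho^2=\frac{1}{n(n+1)}$ whenever $n(n-1)>2$.

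More precisely, $\|y-e_i\|^2\le\|y-c\|^2$ if and only if $y_i\ge\beta:=\frac{n+2}{2(n+1)}$. Maximizing over the two Voronoi regions gives $d(A)=\frac{n}{\sqrt2(n+1)}$. This value is attained exactly at the permutations of $(\beta,1-\beta,0,\dots,0)$, which are points on the edges. Your circumcentre heuristic also fails: the circumcentre of $\co(\{c\}\cup F_i)$ is $c+\frac{n^2}{2}(f_i-c)$, far outside $\Delta$.

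Each maximizer lies within $\frac{1}{\sqrt2(n+1)}<d(A)$ of the edge midpoint $\frac{e_0+e_1}{2}\in A(2)$. Since $\operatorname{dist}(\cdot,A(2))\le\operatorname{dist}(\cdot,A)$ and it is strictly smaller at every maximizer, continuity and compactness give $d(A(2))<d(A)$. So this $A$ is not a counterexample for any $n\ge 3$. Step 2, even if true, only yields $d(A(k))\ge\rho$, which is strictly below $d(A)$.

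Your fallbacks (a short segment near $c$, or shrinking the vertices toward $c$) do not address the real problem. The largest defect lives on the edges of $\co(A)$, and $A(2)$ fills the edges in immediately.

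Your counting idea in Step 2 is exactly the mechanism the paper uses: with at most $n-1$ vertex contributions, one weight in the facet stays small, so the facet centroid cannot be reached. What is missing is a set where the witness point is where the defect is attained, while the rest of the hull is already within that distance of $A$.

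The paper achieves this with $A=T\cup\{v_i+\epsilon e_n:1\le i\le n\}$. Here $T$ is a solid regular $(n-1)$-simplex centred at the origin with $\|v_i\|_2=1$, and $0<\epsilon<n^{-3}$. Then $\co(A)=T+[0,\epsilon]e_n$ is a thin prism. Every point of it is within $\epsilon$ of $T\subset A$, so $d(A)=\epsilon$, attained at $\epsilon e_n$.

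Averages of points of $T$ stay in $T$, so they are at distance at least $\epsilon$ from $\epsilon e_n$. Now suppose an average of $k\le n-1$ points involves a lifted point, say $v_1+\epsilon e_n$. Its horizontal part is $\frac1k$ times a combination of the $v_j$. Let $L\le\frac{k-1}{n-1}$ be the smallest coefficient among $j\ge2$, say at $j=n$, and subtract $L\sum_j v_j=0$. This leaves $\frac1k\sum_{j<n}\mu_j v_j$ with $\mu_j\ge0$ and $\mu_1\ge\frac{1}{n-1}$. Pairing with $v_n$, using $\langle v_j,v_n\rangle=-\frac{1}{n-1}$, bounds its norm below by $\frac{1}{k(n-1)^2}>\epsilon$.
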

This theorem almost complements a result by Fradelizi, Madiman, Marsiglietti, and Zvavitch \cite{fradelizi2018convexification} (originally proved in  \cite[Theorem 4]{fradelizi2016minkowski} using a result from \cite{schneider1975measure}) which showed that for $k\geq n$, we have $d(A(k+1))\leq \frac{k}{k+1} d(A(k))$. These two results leave open whether there exist sets with $d(A(n))=d(A)>0$. The second result of this note is to answer this question in the negative.

\begin{thm}\label{ChatProp}
Let $A\subset\mathbb{R}^n$ with $n\geq 2$ be nonempty and compact, then
$$d(A(n))\leq \left(1-\frac{n-1}{n(2n-1)}\right)d(A).$$
\end{thm}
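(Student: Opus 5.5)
The plan is to fix an arbitrary $x\in\co(A)=\co(A(n))$ and exhibit a point $y\in A(n)$ with $\|x-y\|\le \bigl(1-\frac{n-1}{n(2n-1)}\bigr)d(A)$. Write $d=d(A)$. By Carath\'eodory, $x=\sum_{i=0}^{n}\lambda_i a_i$ with $a_i\in A$, $\lambda_i\ge 0$ and $\sum\lambda_i=1$. Every candidate $y$ will have the form $y=\frac1n(a_{i_1}+\dots+a_{i_m}+b_1+\dots+b_{n-m})$, where the $a_{i_j}$ are Carath\'eodory vertices used exactly and each $b_\ell\in A$ is a nearest point to some auxiliary point of $\co(A)$. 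Such a $y$ incurs error at most $\frac{n-m}{n}d$ when all the auxiliary points lie in $\co(A)$.

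\textbf{Step 1 (a heavy vertex).} If some $\lambda_i\ge \frac1n$, set $x'=\frac{nx-a_i}{n-1}=\sum_j\frac{n\lambda_j-\delta_{ij}}{n-1}a_j$. This is a convex combination, so $x'\in\co(A)$. Pick $b\in A$ with $\|x'-b\|\le d$ and put $y=\frac1n(a_i+(n-1)b)\in A(n)$. Then $\|x-y\|=\frac{n-1}{n}\|x'-b\|\le\frac{n-1}{n}d$, which is better than the claimed constant.

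\textbf{Step 2 (all weights light).} Now every $\lambda_i<\frac1n$. Since $n+1$ weights sum to $1$, they are all nearly uniform: each $\lambda_i>1-\frac{n}{n}\cdot\ldots$, and more precisely the smallest weight satisfies $\lambda_{\min}\le\frac1{n+1}$ while all others exceed $\frac1n-$(small). Here I would interpolate between two competing constructions. The first is the Step~1 construction applied to a vertex with $\mu=n\lambda_i<1$. The point $x'=\frac{nx-a_i}{n-1}$ may now leave $\co(A)$, but it lies within $\frac{1-\mu}{n-1}\|a_i-x''\|$-type distance of the point $x''=\frac{nx-\mu a_i}{n-\mu}\in\co(A)$. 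The second construction takes $y$ to be an average using several vertices $a_i$ together with one nearest-point correction. The two constructions trade off the ``overshoot'' $(1-n\lambda_i)$ against the number of corrected summands. Optimizing the convex combination parameter between them should produce the constant $1-\frac{n-1}{n(2n-1)}$, the denominator $2n-1$ suggesting a balance of the form $\frac{n-1}{n}\cdot t+(1-t)\cdot1$ at $t=\frac{1}{2n-1}$.

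\textbf{Main obstacle.} The difficulty is Step~2: controlling $\|x-a_i\|$, or the overshoot of $x'$ outside $\co(A)$, in terms of $d$ alone, since $a_i$ can be far from $x$. I would first try to choose the Carath\'eodory representation cleverly, for instance using a minimal simplex containing $x$ or vertices near $x$. I would then use that every point of the segment $[x,a_i]$ lies within $d$ of $A$, so that a nearest point of $A$ to an intermediate point of that segment can replace $a_i$. I expect this to yield the required linear-in-$d$ bound with the stated constant.
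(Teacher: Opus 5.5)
Your Step 1 is correct and is the same heavy-vertex trick the paper uses. Step 2, however, is the whole difficulty, and it is not an argument. It describes two constructions loosely and asserts that optimizing between them ``should'' give the constant. The balance you suggest, $\frac{n-1}{n}t+(1-t)$ at $t=\frac1{2n-1}$, equals $1-\frac{1}{n(2n-1)}$, which matches the target only for $n=2$. More seriously, the obstacle you identify defeats the route as posed. With an arbitrary $(n+1)$-point Carath\'eodory representation of an interior point, the distances $\|x-a_i\|$ can be of order $\operatorname{diam}\co(A)$, and so can the overshoot of $x'$ outside $\co(A)$. Neither is controlled by $d(A)$, and you do not specify an intermediate point on $[x,a_i]$ that repairs this.

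The missing ideas are two.

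First, on the boundary Step 1 always applies. If $x\in\partial\co(A)$, then $x\in\co(A)\cap H=\co(A\cap H)$ for a supporting hyperplane $H$. Carath\'eodory inside $H$ uses at most $n$ points, which forces some weight $\ge\frac1n$. Hence $\operatorname{dist}(x,A(n))\le\frac{n-1}{n}d$ for every boundary point.

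Second, for interior $x$, abandon the Carath\'eodory vertices.
\begin{itemize}
\item Take a nearest point $a\in A$, so $\|x-a\|\le d$. Let $b\in\partial\co(A)$ be where the ray from $a$ through $x$ exits $\co(A)$. Then $x=\lambda a+(1-\lambda)b$ with $\|a-b\|\le d/(1-\lambda)$; this is exactly the control on distances your approach lacked.
\item If $\lambda\ge\frac1n$, your Step 1 argument applies to this two-point representation and gives $\frac{n-1}{n}d$. That argument only needs the heavy point $a$ to lie in $A$ and the rest to lie in $\co(A)$.
\item If $\lambda<\frac1n$, compare two candidates. The boundary bound at $b$ gives a point of $A(n)$ within $\bigl(\frac{\lambda}{1-\lambda}+\frac{n-1}{n}\bigr)d$ of $x$. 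The point $\frac1n a+\frac{n-1}{n}a''$, with $a''\in A$ nearest to $b$, lies within $\bigl(\frac{1/n-\lambda}{1-\lambda}+\frac{n-1}{n}\bigr)d$ of $x$.
\item The smaller of the two extra terms is at most $\frac1{2n-1}$, with equality at $\lambda=\frac1{2n}$. This is precisely where the constant $\frac{n-1}{n}+\frac1{2n-1}=1-\frac{n-1}{n(2n-1)}$ comes from.
\end{itemize}
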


In light of the result from \cite{fradelizi2018convexification}, it would be interesting to know whether \Cref{ChatProp} remains true with $1-\frac{n-1}{n(2n-1)}$ replaced by $1-\frac1n$.

The counterexample we consider is an $(n-1)$-dimensional simplex with $n$ points added near the vertices, see \Cref{n=3pic} for an illustration for $n=3$. It is worth noting that this is also the conjectured extremal configuration for the convexification effect on the volume deficit in \cite{van2020sharpL1}. Moreover, this example works (after a suitable rotation and choosing the points near the vertices sufficiently close) for generalizations of the Hausdorff distance, where the Euclidean norm is replaced by any other norm on $\mathbb{R}^n$ as considered in this context by \cite{fradelizi2018convexification} and more recently in the form of the $\ell_p$-norm by \cite{meyer2026sharp}.

\begin{figure}[t]

    \centering

\tdplotsetmaincoords{70}{115}

\begin{tikzpicture}[scale=3.8]

\tikzset{
  subtle point/.style={
    circle,
    fill=red!55!black,
    draw=white,
    line width=0.35pt,
    inner sep=1.05pt,
    fill opacity=0.72
  },
  base label/.style={
    font=\small,
    fill=white,
    fill opacity=0.82,
    text opacity=1,
    inner sep=1.2pt
  },
  eps label/.style={
    font=\small,
    fill=white,
    fill opacity=0.92,
    text opacity=1,
    inner sep=1.2pt
  },
  eps small/.style={
    red!55!black,
    thick,
    densely dotted
  }
}


\begin{scope}[xshift=-1.15cm, tdplot_main_coords]

  \fill[blue!18, fill opacity=0.85]
      (0,1,0) -- (-0.866,-0.5,0) -- (0.866,-0.5,0) -- cycle;
  \draw[blue!65!black, very thick]
      (0,1,0) -- (-0.866,-0.5,0) -- (0.866,-0.5,0) -- cycle;

  \draw[gray!40, dashed, very thin] (0,1,0) -- (0,1,0.32);
  \draw[gray!40, dashed, very thin] (-0.866,-0.5,0) -- (-0.866,-0.5,0.32);
  \draw[gray!40, dashed, very thin] (0.866,-0.5,0) -- (0.866,-0.5,0.32);

  \node[base label, above=2pt]       at (0,1,0)          {$v_1$};
  \node[base label, below left=1pt]  at (-0.866,-0.5,0) {$v_2$};
  \node[base label, below right=1pt] at (0.866,-0.5,0)  {$v_3$};

  \node[subtle point] at (0,1,0.32) {};
  \node[subtle point] at (-0.866,-0.5,0.32) {};
  \node[subtle point] at (0.866,-0.5,0.32) {};

  \draw[gray!45, thin] (0.866,-0.5,0)    -- (1.13,0,0);
  \draw[gray!45, thin] (0.866,-0.5,0.32) -- (1.13,0,0.32);
  \draw[eps small, <->]
      (1.13,0,0) -- (1.13,0,0.32)
      node[midway, right=2pt, eps label] {$\varepsilon$};

  \node[font=\large] at (0,0,-0.57) {$A$};

\end{scope}

\begin{scope}[xshift=1.15cm, tdplot_main_coords]

  \fill[blue!18, fill opacity=0.85]
      (0,1,0) -- (-0.866,-0.5,0) -- (0.866,-0.5,0) -- cycle;
  \draw[blue!65!black, very thick]
      (0,1,0) -- (-0.866,-0.5,0) -- (0.866,-0.5,0) -- cycle;

  \fill[orange!24, fill opacity=0.88]
      (0,1,0.16) -- (-0.433,0.25,0.16) -- (0.433,0.25,0.16) -- cycle;
  \draw[orange!70!black, thick]
      (0,1,0.16) -- (-0.433,0.25,0.16) -- (0.433,0.25,0.16) -- cycle;

  \fill[orange!24, fill opacity=0.88]
      (-0.433,0.25,0.16) -- (-0.866,-0.5,0.16) -- (0,-0.5,0.16) -- cycle;
  \draw[orange!70!black, thick]
      (-0.433,0.25,0.16) -- (-0.866,-0.5,0.16) -- (0,-0.5,0.16) -- cycle;

  \fill[orange!24, fill opacity=0.88]
      (0.433,0.25,0.16) -- (0,-0.5,0.16) -- (0.866,-0.5,0.16) -- cycle;
  \draw[orange!70!black, thick]
      (0.433,0.25,0.16) -- (0,-0.5,0.16) -- (0.866,-0.5,0.16) -- cycle;

  \draw[gray!50, dashed, thick]
      (-0.433,0.25,0.16) -- (0.433,0.25,0.16) -- (0,-0.5,0.16) -- cycle;

  \node[subtle point] at (0,1,0.32) {};
  \node[subtle point] at (-0.866,-0.5,0.32) {};
  \node[subtle point] at (0.866,-0.5,0.32) {};
  \node[subtle point] at (-0.433,0.25,0.32) {};
  \node[subtle point] at (0,-0.5,0.32) {};
  \node[subtle point] at (0.433,0.25,0.32) {};

  \draw[gray!45, thin] (0.866,-0.5,0)    -- (1.20,-0.12,0);
  \draw[gray!45, thin] (0.866,-0.5,0.16) -- (1.20,-0.12,0.16);
  \draw[eps small, <->]
      (1.20,-0.12,0) -- (1.20,-0.12,0.16)
      node[midway, right=2pt, eps label] {$\varepsilon/2$};

  \node[font=\large] at (0,0,-0.57) {$A(2)=\dfrac{A+A}{2}$};

\end{scope}

\end{tikzpicture}

    \caption{Construction of the set $A$ in dimension $n=3$ and the corresponding second Minkowski average $A(2)=\frac{A+A}{2}$.}

    \label{n=3pic}

\end{figure}

\begin{ack*}
The author is grateful to ChatGPT 5.5 Pro for helping him prove \Cref{ChatProp} and generating the TikZ code for the image of the construction for $n=3$. The responsibility for the content of this note lies exclusively with the author.
\end{ack*}

\section{Proofs}
\begin{proof}[Proof of \Cref{exampleprop}]
Let $T\subset\mathbb{R}^{n-1}\times\{0\}$ be a centered regular $(n-1)$-dimensional simplex with vertices $v_1,\dots,v_n$, so that $T$ is the convex hull of the $v_i$'s, i.e. $T=\left\{\sum \lambda_i v_i: \lambda_i\geq0, \sum \lambda_i=1\right\}$. Assume that $\|v_i\|_2=1$ for all $i$. Let $0<\epsilon<\frac{1}{n^3}$. Define $A\subset\mathbb{R}^n$ as follows:
$$A:=T\cup \{v_i+\epsilon e_n: i=1,\dots n\},$$
where $e_n$ denotes the last standard basis vector.
Note that $\co(A)=T+[0,\epsilon]e_n$, so that $d(A)=\epsilon$.

What remains is a simple computation to verify that the distance from $A(k)$ to the point $\epsilon e_n\in \co(A)=\co(A(k))$ is indeed $\epsilon$ for all $1\leq k\leq n-1$.

Consider points $a_1,\dots,a_k\in A$ and let $m:=\frac{a_1+\dots+a_k}{k}$. If $a_i\in T$ for all $i$, then $m\in T$, so clearly $\|m-\epsilon e_n\|_2\geq \epsilon$. Consider the case that at least one of the $a_i$'s is of the form $v_i+\epsilon e_n$, say $a_1=v_1+\epsilon e_n$. Write all the others as $a_i=\eta_i\epsilon e_n+\sum_j \lambda_{i,j}v_j$, with $\eta_i\in \{0,1\}$, $\lambda_{i,j}\geq 0$ and $\sum_j\lambda_{i,j}=1$. Now we can write 
$$km=\sum_{i=1}^ka_i=v_1+\epsilon e_n+\left(\sum_{i=2}^k \eta_i\right)\epsilon e_n +\sum_j\left(\sum_{i=2}^k\lambda_{i,j}\right)v_j.$$

First note that we may ignore the component of $m-\epsilon e_n$ in the $e_n$ direction, to find that
$$\|m-\epsilon e_n\|_2\geq \frac1k\left\|v_1+\sum_j\left(\sum_{i=2}^k\lambda_{i,j}\right)v_j\right\|_2 $$

Consider the smallest coordinate $L$ for some $v_j$ apart from $v_1$, and note that since $k\leq n-1$ we have
$$L:=\min_{2\leq j\leq n}\left(\sum_{i=2}^k\lambda_{i,j}\right)\leq \frac{\sum_{j=2}^{n}\sum_{i=2}^k\lambda_{i,j}}{n-1} \leq \frac{k-1}{n-1}\leq \frac{n-2}{n-1}.$$
Without loss of generality, say $L$ corresponds to the $v_n$ coefficient, i.e. $L=\sum_{i=2}^k\lambda_{i,n}$.
Recall that since $T$ is centered and regular, we have that $\sum_{j=1}^nv_j=0$, so that
$$v_1+\sum_{j=1}^n\left(\sum_{i=2}^k\lambda_{i,j}\right)v_j=v_1+\sum_{j=1}^n\left(\left(\sum_{i=2}^k\lambda_{i,j}\right)-L\right)v_j=\sum_{j=1}^{n-1}\mu_jv_j,$$
for some coefficients $\mu_j\geq0$. In particular, we have that $\mu_1\geq 1-L\geq \frac{1}{n-1}$. Hence, using that $\langle v_i,v_n\rangle=-\frac{1}{n-1}$ for all $i=1,\dots, n-1$, we find that 
$$\frac1k\left\|v_1+\sum_j\left(\sum_{i=2}^k\lambda_{i,j}\right)v_j\right\|_2= \frac1k\left\|\sum_{j=1}^{n-1}\mu_jv_j\right\|_2\geq 
\frac1k\left|\left\langle v_n,\sum_{j=1}^{n-1}\mu_jv_j\right\rangle\right|=\frac{1}{k(n-1)} \sum_{j=1}^{n-1} \mu_j\geq \frac{1}{k(n-1)^2}>\epsilon.$$
Hence, the origin is indeed the point in $A(k)$ from $\epsilon e_n\in \co(A)$, so that indeed $\operatorname{dist}(\epsilon e_n, A(k))=\epsilon$ for all $k\leq n-1$.
\end{proof}

We now turn to the proof of \Cref{ChatProp} which was found with the aid of ChatGPT 5.5 Pro. The argument is inspired by the eventual monotonicity theorem of Fradelizi, Madiman, Marsiglietti, and Zvavitch \cite{fradelizi2016minkowski} using \cite{schneider1975measure}, with a tweak to handle the points in the interior of $\co(A)$.

\begin{proof}[Proof of \Cref{ChatProp}]
First note that if $d(A)=0$, then $d(A(n))\leq d(A)=0$, so there is nothing to prove. Henceforth, assume $d(A)>0$.
 We aim to upper bound
\(\operatorname{dist}(x,A(n))\) uniformly for \(x\in \co(A)=\co(A(n))\). First note that if $x\in A(n)$, then \(\operatorname{dist}(x,A(n))=0\) so that the conclusion is immediate. Henceforth assume $x\not\in A(n)$.
We distinguish two cases according to whether $x\in \partial\co(A)$, or $x\in \operatorname{int}\co(A)$.

\begin{case}\label{case1}
    $x\in \partial\co(A)$
\end{case} 
Let $F$ be a face of $\co(A)$ containing \(x\) with \(m:=\dim F<n\). Note that we in particular have $x\in \co(A\cap F)$.  By Carath\'eodory's theorem, we can write 
$$x=\sum_{i=0}^m \lambda_i a_i,$$
with $a_i\in A\cap F$, $\lambda_i\geq 0$, and $\sum_{i=0}^m\lambda_i=1$.
Additionally consider the points
$$p:=\frac{\sum_{i=0}^m \lfloor n\lambda_i\rfloor a_i}{\sum_{i=0}^m \lfloor n\lambda_i\rfloor }\in A\left(\sum_{i=0}^m \lfloor n\lambda_i\rfloor \right)\cap F \qquad \text{ and }\qquad q:= \frac{\sum_{i=0}^m \left(n\lambda_i-\lfloor n\lambda_i\rfloor\right) a_i}{n-\sum_{i=0}^m \lfloor n\lambda_i\rfloor }\in F.$$
We first check these are well-defined. If $\sum_{i=0}^m \lfloor n\lambda_i\rfloor =n$, then $x=p\in A(n)$, which contradicts that $x\not\in A(n)$, so $n-\sum_{i=0}^m \lfloor n\lambda_i\rfloor\geq 1$ and $q$ is well-defined. Since $m\leq n-1$ and $\sum_{i=0}^m \lambda_i=1$, at least one of the $\lambda_i\geq 1/n$, so $\sum_{i=0}^m \lfloor n\lambda_i\rfloor\geq 1$ and $p$ is well-defined.

Note that $x=\frac{\left(\sum_{i=0}^m \lfloor n\lambda_i\rfloor\right) p + \left(n-\sum_{i=0}^m \lfloor n\lambda_i\rfloor\right) q}{n}$, but unfortunately $q$ may not be in $A(n-\sum_{i=0}^m \lfloor n\lambda_i\rfloor)$. To remedy this, note that $q\in \co(A)$, so there exists some $a\in A$ with $\|q-a\|_2\leq d(A)$. Consider the point
$$s:= \frac{\left(\sum_{i=0}^m \lfloor n\lambda_i\rfloor\right) p + \left(n-\sum_{i=0}^m \lfloor n\lambda_i\rfloor\right) a}{n}\in A(n).$$
Finally, we have
$$\|x-s\|_2=\left\|\frac{\left(n-\sum_{i=0}^m \lfloor n\lambda_i\rfloor\right) }{n} (q-a)\right\|_2\leq \frac{n-1}{n}\|q-a\|_2\leq \frac{n-1}{n}d(A),$$
which concludes this case.

\begin{case}
   $x\in \operatorname{int}\co(A)$ 
\end{case}
 Choose \(a\in A\) with
\(\|x-a\|_2\le d(A)\). If \(a=x\), then \(x\in A\subset A(n)\) and there is nothing to prove. Otherwise, let the ray from \(a\) through \(x\) meet \(\partial\co(A)\) at \(b\), and let $\lambda\in[0,1)$ so that
\(
x=\lambda a+(1-\lambda)b.
\)
Then
\[
\|a-b\|_2=\frac{\|a-x\|_2}{1-\lambda}\le \frac{d(A)}{1-\lambda}.
\]

If \(\lambda\ge 1/n\), then $z:=
\frac{x-\frac1n a}{1-\frac1n}\in [a,b] \subset \co(A)$.
Find \(a'\in A\) within distance \(d(A)\) of $z$ and let $p:=\frac{n-1}{n}a'+\frac{1}{n}a\in A(n)$. We find that

$$\|x-p\|_2=\left\| \left(\frac{n-1}{n}z+\frac{1}{n}a\right)-\left(\frac{n-1}{n}a'+\frac{1}{n}a\right)\right\|_2=\frac{n-1}{n}\|z-a'\|_2\leq \frac{n-1}{n}d(A).$$

It remains to consider \(0\le \lambda<1/n\). Find \(a''\in A\) with \(\|b-a''\|_2\le d(A)\) and let
$
q:=\frac1n a+\frac{n-1}{n}a''\in A(n).
$
We can compute
\[
\left\|
x-q
\right\|_2\leq \left\|
\left(\lambda a+(1-\lambda)b\right)-\left(\frac1n a+\frac{n-1}{n}a''\right)
\right\|_2
\le \left(\frac1n-\lambda\right)\|a-b\|_2+\frac{n-1}{n}\|b-a''\|_2
\le
\left(\frac{\frac1n-\lambda}{1-\lambda}
+
\frac{n-1}{n}\right)d(A).
\]
On the other hand, we know from \Cref{case1}, that since \(b\in\partial\co(A)\), there exists \(y\in A(n)\) with
$\|b-y\|_2\le \frac{n-1}{n}d(A),$
so
\[
\|x-y\|_2\leq \|x-b\|_2+\|b-y\|_2
\le \frac{\lambda}{1-\lambda} \|x-a\|_2+\frac{n-1}{n}d(A)
\le
\left(\frac{\lambda}{1-\lambda}
+
\frac{n-1}{n}\right)d(A).
\]
Combining these two bounds, we find
\[
\operatorname{dist}(x,A(n))\le \min\{\left\|
x-q
\right\|_2,\|x-y\|_2\}
\le
\left(
\frac{n-1}{n}
+
\min\left\{
\frac{\lambda}{1-\lambda},
\frac{\frac1n-\lambda}{1-\lambda}
\right\}
\right)d(A).
\]
For \(0\le \lambda<1/n\), it is easy to check that $
\min\left\{
\frac{\lambda}{1-\lambda},
\frac{\frac1n-\lambda}{1-\lambda}
\right\}
\le \frac1{2n-1}$,
with equality at \(\lambda=1/(2n)\). Thus
\[
\operatorname{dist}(x,A(n))
\le
\left(\frac{n-1}{n}+\frac1{2n-1}\right)d(A)=\left(1-\frac{n-1}{n(2n-1)}\right)d(A),
\]
which concludes the second case and thus the proof of \Cref{ChatProp}.
\end{proof}

\bibliographystyle{alpha}
\bibliography{references}

\end{document}